\theoremstyle{plain}
\newtheorem{theorem}{Theorem}[section]
\newtheorem{proposition}[theorem]{Proposition}
\theoremstyle{definition}
\theoremstyle{remark}
\newcommand{\DD}{\mathbb{D}}
\newcommand{\cD}{\mathcal{D}}
\begin{document}

\date{\today}
\title[Ces\`aro  summability of Taylor series]{Ces\`aro  summability of Taylor series  in weighted Dirichlet spaces}

\author{Javad Mashreghi}
\address{D\'epartement de math\'ematiques et de statistique, Universit\'e Laval,
Qu\'ebec City (Qu\'ebec),  Canada G1V 0A6.}
\email{javad.mashreghi@mat.ulaval.ca}

\author{Pierre-Olivier Paris\'e}
\address{D\'epartement de math\'ematiques et de statistique, Universit\'e Laval,
Qu\'ebec City (Qu\'ebec),  Canada G1V 0A6.}
\email{pierre-olivier.parise.1@ulaval.ca}

\author{Thomas Ransford}
\address{D\'epartement de math\'ematiques et de statistique, Universit\'e Laval,
Qu\'ebec City (Qu\'ebec),  Canada G1V 0A6.}
\email{thomas.ransford@mat.ulaval.ca}

\thanks{JM supported by an NSERC Discovery Grant. 
POP supported by an NSERC Alexander-Graham-Bell Scholarship.
TR supported by grants from NSERC and the Canada Research Chairs program.}

\begin{abstract}
We show that, in every weighted Dirichlet space on the unit disk with superharmonic weight,
the Taylor series of a function in the space is $(C,\alpha)$-summable to the function in the norm of the space, 
provided that $\alpha>1/2$. We further show that the constant $1/2$ is sharp,
in marked contrast with the classical case of the disk algebra.
\end{abstract}

\subjclass[2010]{40G05, 40J05, 41A10, 46E20}

\keywords{Weighted Dirichlet space, Ces\`{a}ro mean, Riesz mean, Hadamard multiplication}

\maketitle

\section{Introduction}\label{S:Intro}
Let $f(z)$ be a formal power series,
say $f(z)=\sum_{k=0}^\infty a_kz^k$.
Many holomorphic function spaces on the unit disk $\DD$ have the property that,
if $f$ belongs to the space, then its Taylor partial sums
\[
s_n[f](z):=\sum_{k=0}^n a_kz^k
\]
converge to $f$ in the norm of the space.
This is the case, for example, 
if the space in question is the Hardy space $H^2$, the Dirichlet space $\cD$ or the Bergman space $A^2$. 
It is also true in all the Hardy spaces $H^p$ for $1<p<\infty$,
even though the proof is not as straightforward as in the other cases.

There are also spaces in which convergence may fail. For instance, a classic example of du Bois-Reymond shows that there exists $f$ in the disk algebra $A(\DD)$ such that $s_n[f]$ does not converge to $f$ in the norm of $A(\DD)$. The same phenomenon can occur in the Hardy space $H^1$. In both of these cases, however, Fej\'er's theorem shows that the Ces\`aro sums
\[
\sigma_n[f](z):=
\frac{1}{n+1}\sum_{k=0}^n s_k[f](z)
=\sum_{k=0}^n\Bigl(1-\frac{k}{n+1}\Bigr)a_kz^k
\]
do converge to $f$ in the norm of the space. 

In the case of the disk algebra, there is a refinement of Fej\'er's theorem 
due to M.~Riesz \cite{Ri09}, who showed that the generalized Ces\`aro means
\begin{equation}\label{E:cesaro}
\sigma_n^\alpha[f](z):=
\binom{n+\alpha}{\alpha}^{-1}\sum_{k=0}^n\binom{n-k+\alpha}{\alpha}a_kz^k
\end{equation}
converge to $f$ in the norm of $A(\DD)$ for each $\alpha>0$. Here,
the binomial coefficients should be interpreted as
\[
\binom{n+\alpha}{\alpha}:=\frac{\Gamma(n+\alpha+1)}{\Gamma(\alpha+1)\Gamma(n+1)},
\]
where $\Gamma$ denotes the gamma function.
An analogous refinement holds in $H^1$.

The fact that $\sigma_n^\alpha[f]$ converges to $f$ in the space is often described by saying that the Taylor series of $f$ is $(C,\alpha)$-summable to $f$ in the space. 
It is well known that $(C,\alpha)$-summability implies $(C,\beta)$-summability if $\alpha<\beta$. 
Thus the $(C,\alpha)$-summability of $f$ for $\alpha>0$ improves Fej\'er's result on the convergence of $\sigma_n[f]$ (namely $(C,1)$-summability) almost to the point of establishing the convergence of $s_n[f]$ itself (namely $(C,0)$-summability).
For background on summability methods, we refer to Hardy's book \cite{Ha49}.

What happens in other spaces?
In this article, we consider the family of weighted Dirichlet spaces with superharmonic weights.  
Dirichlet spaces with harmonic weights were introduced by Richter \cite{Ri91} and further studied by
Richter and Sundberg \cite{RS91}. The generalization to superharmonic weights was treated by Aleman \cite{Al93}.

Let us recall the definition.
Given a positive superharmonic function $\omega$  on $\DD$ 
and a holomorphic function $f$ on $\DD$, we define
\[
\cD_\omega(f):=\int_\DD|f'(z)|^2\,\omega(z)\,dA(z),
\]
where $dA$ denotes normalized area measure on $\DD$.
The \emph{weighted Dirichlet space} $\cD_\omega$ 
is the set of holomorphic $f$ on $\DD$ with $\cD_\omega(f)<\infty$.
Defining
\[
\|f\|_{\cD_\omega}^2:=|f(0)|^2+\cD_\omega(f) 
\qquad(f\in\cD_\omega),
\]
makes $\cD_\omega$ into a Hilbert space containing the polynomials.

It is known that, if $\omega$ is a superharmonic weight and if $f\in\cD_\omega$, 
then $\sigma_n[f]\to f$ in $\cD_\omega$. In particular, polynomials are dense in $\cD_\omega$.
On the other hand, there exist a superharmonic weight $\omega$
and a function $f\in\cD_\omega$ such that $s_n[f]\not\to f$ in $\cD_\omega$.
For proofs of these facts, see \cite[Theorem~1.6]{MR19}.

Thus $\cD_\omega$ behaves a bit like the spaces $A(\DD)$ and $H^1$.
By analogy with what happens in these spaces, 
we might therefore expect Taylor series to be $(C,\alpha)$-summable in $\cD_\omega$ for all $\alpha>0$. 
This turns out not to be the case. We shall establish the following results.

\begin{theorem}\label{T:thm1}
If $\omega$ is a superharmonic weight on $\DD$, 
 if $f\in\cD_\omega$ and if $\alpha>1/2$, then $\sigma_n^\alpha[f]\to f$ in $\cD_\omega$.
\end{theorem}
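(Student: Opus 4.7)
The plan is to apply the Banach--Steinhaus theorem. Since polynomials are dense in $\cD_\omega$ (as recalled in the introduction) and $\sigma_n^\alpha[p] \to p$ in $\cD_\omega$ for every polynomial $p$ (because $c_{n,k}^\alpha \to 1$ for each fixed $k$ as $n\to\infty$), the conclusion reduces to proving the uniform operator bound $\sup_n\|\sigma_n^\alpha\|_{\cD_\omega \to \cD_\omega} < \infty$ whenever $\alpha>1/2$. A direct coefficient calculation provides the differentiation identity
\[
(\sigma_n^\alpha[f])'(z) \;=\; \frac{n}{n+\alpha}\,\sigma_{n-1}^\alpha[f'](z),
\]
which in turn reduces the uniform bound to the weighted $L^2$-estimate
\[
\int_\DD |\sigma_n^\alpha[g](z)|^2\,\omega(z)\,dA(z) \;\leq\; C_\alpha \int_\DD |g(z)|^2\,\omega(z)\,dA(z)
\]
for every holomorphic $g$ on $\DD$, to be applied with $g=f'$.

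To prove the estimate just displayed I would decompose $\omega$ via the Riesz representation theorem as $\omega = h + U^\nu$, where $h$ is a positive harmonic function on $\DD$ and $U^\nu(z) = \int_\DD G(z,w)\,d\nu(w)$ is a Green potential. Both sides of the inequality are linear in $\omega$, so it suffices to treat the harmonic and Green-potential components separately. For harmonic $h$ (the Poisson integral of a positive measure $\mu$ on $\TT$), the identity $\cD_h(f) = \int_\TT D_\zeta(f)\,d\mu(\zeta)$ with $D_\zeta$ the local Dirichlet integral, together with the rotation invariance of $\sigma_n^\alpha$, reduces the problem to the single boundary point $\zeta=1$, where $D_1(f)=\sum_{j\geq 0}|S_j|^2$ with tail sums $S_j := \sum_{k>j}a_k$. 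An Abel summation then expresses the corresponding tail sums of $\sigma_n^\alpha[f]$ as
\[
S_j^{(n)} \;=\; c_{n,j+1}^\alpha\,S_j \;-\;\sum_{k=j+1}^{n-1}\bigl(c_{n,k}^\alpha - c_{n,k+1}^\alpha\bigr) S_k \;-\; c_{n,n}^\alpha\,S_n,
\]
so that the inequality $\sum_j|S_j^{(n)}|^2\leq C_\alpha \sum_j|S_j|^2$ becomes an $\ell^2$-boundedness problem for the associated kernel. A Schur test with the weights $\phi_j=\psi_j=(n+1-j)^{\alpha-1}$ closes the argument; the critical row-sum estimate depends on the asymptotics $\sum_{m=1}^N m^{2\alpha-2}=O(N^{2\alpha-1})$, which requires precisely $\alpha>1/2$ and fails (at best logarithmically) at the borderline $\alpha=1/2$. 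This explains both the threshold and its sharpness.

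For the Green-potential part the case $w=0$ is immediate, since $G(z,0)=-\log|z|$ makes the weighted integral proportional to a Hardy $H^2$-norm on which $\sigma_n^\alpha$ is contractive. For general $w\in\DD$, a M\"obius substitution $u = \phi_w(z)$ transforms the weight into $-\log|u|$ (up to a conformal factor) and a Schur-type argument parallel to the harmonic case---now carried out in a Hardy-type setting---yields the uniform bound with the same threshold. Combining the harmonic and Green-potential estimates and invoking Banach--Steinhaus completes the proof. The principal technical obstacle is the Schur-test calculation in the local Dirichlet step: both the correct choice of weights and the sharp row-sum estimate are delicate, and it is precisely there that the critical value $\alpha=1/2$ arises.
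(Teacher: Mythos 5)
Your overall skeleton --- Banach--Steinhaus, density of polynomials, $c_{n,k}^\alpha\to1$ for fixed $k$, hence reduction to a uniform operator bound --- is exactly the paper's, and the differentiation identity $(\sigma_n^\alpha[f])'=\tfrac{n}{n+\alpha}\sigma_{n-1}^\alpha[f']$ is correct. Where you diverge is in how the uniform bound is obtained. The paper does not redevelop the structure theory of $\cD_\omega$: it quotes the Hadamard-multiplier bound $\|M_h\|_{\cD_\omega\to\cD_\omega}\le\|T_h\|_{\ell^2\to\ell^2}$ from \cite[Theorem~1.1]{MR19} (Theorem~\ref{T:mult}(i)), where $T_h$ is precisely the triangular tail-sum matrix your Abel summation produces; it then passes from Ces\`aro to discrete Riesz means via M.~Riesz's equivalence theorem (Proposition~\ref{P:equiv}) and applies the crude estimate $\|T_c\|^2\le(n+1)\sum_k|c_{k+1}-c_k|^2$ of Theorem~\ref{T:Tc}(i), which evaluates to $\alpha^2/(2\alpha-1)$. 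Your harmonic-weight argument is a legitimate alternative for this piece: the Richter--Sundberg formula $\cD_h(f)=\int_\TT D_\zeta(f)\,d\mu(\zeta)$, the tail-sum expression $D_1(f)=\sum_j|S_j|^2$, and your Abel-summation formula are all correct, and the Schur test with weights $(n+1-j)^{\alpha-1}$ does close: the row sums are $O(\alpha/(2\alpha-1))$ via $\sum_{m\le N}m^{2\alpha-2}=O(N^{2\alpha-1})$, the column sums are $O(1)$, and the threshold $\alpha>1/2$ enters exactly where you say. (In fact, since $c_{n,k}-c_{n,k+1}=\binom{n-k-1+\alpha}{\alpha-1}/\binom{n+\alpha}{\alpha}\asymp\alpha(n-k)^{\alpha-1}n^{-\alpha}$, the paper's crude bound also works directly on the Ces\`aro coefficients; neither the Riesz-mean detour nor the Schur test is strictly needed for the upper bound.)

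The genuine gap is the Green-potential case, which is precisely the content the paper outsources to \cite[Theorem~1.1]{MR19}. The substitution $u=\phi_w(z)$ does convert the weight $G(z,w)$ into $\log(1/|u|)$, but it simultaneously converts $\sigma_n^\alpha[g]$ into $\sigma_n^\alpha[g]\circ\phi_w$, and the coefficient multiplier $\sigma_n^\alpha$ does not commute with composition by $\phi_w$; so the diagonal contractivity argument that worked at $w=0$ is no longer available, and the substitution by itself does not hand you ``a Schur-type argument parallel to the harmonic case.'' What actually closes this case is the identity $\int_\DD|F'|^2G(\cdot,w)\,dA=\tfrac{1}{2}(1-|w|^2)\int_\TT\bigl|\tfrac{F(\eta)-F(w)}{\eta-w}\bigr|^2\tfrac{|d\eta|}{2\pi}$ (obtained by applying your M\"obius substitution to the whole integral and using Littlewood--Paley), whose right side equals $\tfrac12(1-|w|^2)\sum_j|q_j(w)|^2$ with $q_j(w)=\sum_{k>j}a_kw^{k-j-1}$. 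Since $a_k=q_{k-1}(w)-wq_k(w)$, Abel summation shows that $\sigma_n^\alpha$ acts on $(q_j(w))_j$ by the same triangular matrix as in the boundary case except that the $(j,k)$ entry carries an extra factor $w^{k-j}$; this matrix is dominated entrywise in modulus by the nonnegative boundary matrix, so your Schur test applies verbatim and the interior case follows. As written, your proposal neither states this identity nor identifies the quantities $q_j(w)$ on which the matrix acts, so the Green-potential component of the decomposition $\omega=h+U^\nu$ is asserted rather than proved; you should either supply this step or simply cite \cite[Theorem~1.1]{MR19} as the paper does.
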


\begin{theorem}\label{T:thm2}
Let $\omega_1$ be the harmonic weight on $\DD$ defined by $\omega_1(z):=(1-|z|^2)/|1-z|^2$.
Then there exists  $f\in\cD_{\omega_1}$ such that  $\sigma_n^{1/2}[f]\not\to f$ in $\cD_{\omega_1}$.
\end{theorem}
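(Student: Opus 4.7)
The plan is to apply the Banach--Steinhaus theorem. Polynomials are dense in $\cD_{\omega_1}$, and $\sigma_n^{1/2}[p]\to p$ in norm for every polynomial $p$ (since the Ces\`aro weights $c_{n,k}:=\binom{n-k+1/2}{1/2}/\binom{n+1/2}{1/2}$ tend to $1$ for each fixed $k$), so the convergence $\sigma_n^{1/2}[f]\to f$ would hold throughout $\cD_{\omega_1}$ if the operator norms $\|\sigma_n^{1/2}\|_{\cD_{\omega_1}\to\cD_{\omega_1}}$ were uniformly bounded. Hence it suffices to prove that these norms tend to infinity; a suitable $f$ then comes out of Banach--Steinhaus.

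To analyze the norms, invoke the Richter--Sundberg identity
\[
\cD_{\omega_1}(f)=\int_\TT\left|\frac{f(\zeta)-f(1)}{\zeta-1}\right|^2\frac{|d\zeta|}{2\pi},
\]
which says exactly that $V(f):=\bigl(f(0),\,(f(z)-f(1))/(z-1)\bigr)$ is an isometry of $\cD_{\omega_1}$ onto $\CC\oplus H^2$. Using the identity $z^k-1=(z-1)(1+z+\cdots+z^{k-1})$ and regrouping, one checks that under $V$ the operator $\sigma_n^{1/2}$ is conjugate to $\mathrm{id}_\CC\oplus T_n$, where $T_n:H^2\to H^2$ sends $g(z)=\sum_{j\ge 0}b_j z^j$ to
\[
(T_n g)(z)=\sum_{j=0}^{n-1}z^j\sum_{k=j+1}^n c_{n,k}(b_{k-1}-b_k).
\]
It is therefore enough to prove $\|T_n\|_{H^2\to H^2}\to\infty$.

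For this, test the adjoint on $c_n(z):=1+z+\cdots+z^{n-1}\in H^2$, which satisfies $\|c_n\|_{H^2}^2=n$. A summation by parts shows that the $\ell$-th Taylor coefficient of $T_n^*c_n$ equals $(\ell+1)c_{n,\ell+1}-\ell c_{n,\ell}$ for $0\le\ell\le n-1$ and $-nc_{n,n}$ for $\ell=n$. Setting $a_\ell:=\ell c_{n,\ell}$ and invoking the asymptotic $c_{n,\ell}\sim\sqrt{(n-\ell+1)/(n+1)}$ (a consequence of $\binom{m+1/2}{1/2}\sim 2\sqrt{m+1}/\sqrt{\pi}$), the forward difference $a_{\ell+1}-a_\ell$ is of order $\sqrt{n/(n-\ell)}$ for $\ell$ close to $n$, so
\[
\|T_n^*c_n\|_{H^2}^2\ge\sum_{\ell=0}^{n-1}(a_{\ell+1}-a_\ell)^2\gtrsim\sum_{m=1}^{n/3}\frac{n}{m}\gtrsim n\log n.
\]
Since $\|c_n\|_{H^2}^2=n$, this gives $\|T_n\|\gtrsim\sqrt{\log n}\to\infty$, as required. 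The most delicate step is the last asymptotic: although $a_{\ell+1}-a_\ell$ changes sign near $\ell=2n/3$, the squared increments near $\ell\approx n$ pile up to give the logarithmic blow-up, and no cancellation is possible in a sum of non-negative terms.
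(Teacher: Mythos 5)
Your argument is correct in substance, but it takes a genuinely different route from the paper's. The paper first trades the Ces\`aro means $\sigma_n^{1/2}$ for the discrete Riesz means $\rho_n^{1/2}[f](z)=\sum_{k\le n}(1-k/(n+1))^{1/2}a_kz^k$ via a Banach-space version of a theorem of M.~Riesz (Proposition~\ref{P:equiv}), which makes all coefficient asymptotics trivial; it then quotes the exact identity $\|M_{h}\colon\cD_{\omega_1}\to\cD_{\omega_1}\|=\|T_{h}\colon\ell^2\to\ell^2\|$ for Hadamard multipliers (Theorem~\ref{T:mult}(ii)) and bounds $\|T_{h_n}\|$ from below by the norm of a submatrix with constant rows (Theorem~\ref{T:Tc}(ii)), getting $\|T_{h_n}\|^2\ge\frac18\log\bigl((n+1)/2\bigr)$. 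You instead keep the binomial weights $c_{n,k}$ throughout, re-derive the isometry $\cD_{\omega_1}\cong\CC\oplus H^2$ from the Richter--Sundberg local Douglas formula --- this is precisely how Theorem~\ref{T:mult}(ii) is proved in \cite{MR19}, and your $T_n$ is exactly the paper's matrix $T_{h_n}$ with $c_k=c_{n,k}$ --- and you test the adjoint on $1+z+\cdots+z^{n-1}$ rather than extracting a constant-row submatrix. Both test objects detect the same $\log n$ blow-up. What your route buys is that you avoid the Riesz equivalence theorem entirely; what it costs is that you must control the genuine binomial weights, and that is where your write-up has its one soft spot.

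Namely, when you estimate $a_{\ell+1}-a_\ell$ with $a_\ell=\ell c_{n,\ell}$, you substitute the asymptotic $c_{n,\ell}\sim\sqrt{(n-\ell+1)/(n+1)}$ into a difference of two terms each of size $\asymp\sqrt{n(n-\ell)}$ whose difference is only $\asymp\sqrt{n/(n-\ell)}$; the relative error $O(1/(n-\ell))$ in that asymptotic contributes an absolute error of the same order as the quantity being computed, so the substitution is not by itself a proof. The estimate is nonetheless true and easily repaired: the exact ratio $c_{n,\ell+1}/c_{n,\ell}=(n-\ell)/(n-\ell+\tfrac12)$ gives
\[
a_{\ell+1}-a_\ell=c_{n,\ell}\,\frac{n-\tfrac32\ell}{\,n-\ell+\tfrac12\,},
\]
whence $(a_{\ell+1}-a_\ell)^2\gtrsim n/(n-\ell)$ for, say, $3n/4\le\ell\le n-1$; this also confirms your observation that the increment changes sign near $\ell=2n/3$, and you should restrict the sum to $m=n-\ell\le n/4$ rather than $n/3$ so that the numerator stays comparable to $n$. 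With that repair, your proof is complete.
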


In the terminology of \cite{RS91}, the space $\cD_{\omega_1}$ is a local Dirichlet space.
Theorem~\ref{T:thm2} shows that, even though $\cD_{\omega_1}$ is  a Hilbert space,  
Taylor series in the space actually have worse summability behaviour than in $A(\DD)$ or $H^1$. 

The proofs of Theorems~\ref{T:thm1} and \ref{T:thm2} make use of the theory
of Hadamard multiplication operators of $\cD_\omega$ as developed in \cite{MR19}.
In \S\ref{S:hadamard} we briefly review this theory, before passing to the proofs of the theorems
themselves in \S\ref{S:proofs}. 


\section{Hadamard multiplication operators}\label{S:hadamard}

Given formal power series  $h(z):=\sum_{k=0}^\infty c_kz^k$ and $f(z):=\sum_{k=0}^\infty a_kz^k$,
we define their \emph{Hadamard product} to be the formal power series given by the formula
\[
(h*f)(z):=\sum_{k=0}^\infty c_ka_k z^k.
\]
Obviously, if  $h$  is a polynomial, then $h*f$ is a polynomial too. 
In this case, for each superharmonic weight $\omega$ on $\DD$,
the map $M_h:f\mapsto h*f$ is a bounded linear map from $\cD_\omega$ to itself,
sometimes called a \emph{Hadamard multiplication operator}.
We are interested in estimating its operator norm, $\|M_h:\cD_\omega\to\cD_\omega\|$.

To state our results, we need a little extra notation.
Given a sequence of complex numbers $(c_k)_{k\ge1}$, we write
$T_c$ for the infinite matrix
\begin{equation}\label{E:Tc}
T_c:=
\begin{pmatrix}
c_1 &c_2-c_1 &c_3-c_2 &c_4-c_3 &\dots\\
0 &c_2 &c_3-c_2 &c_4-c_3 &\dots\\
0 &0 &c_3 &c_4-c_3 &\dots\\
0 &0 &0 &c_4 &\dots\\
\vdots &\vdots &\vdots &\vdots &\ddots
\end{pmatrix}.
\end{equation}
If the $(c_k)$ are the  coefficients of a formal power series $h(z)=\sum_{k=0}^\infty c_kz^k$,
then we also write $T_h$ in place of $T_c$. Note that, in this situation, 
the coefficient $c_0$ plays no role.

\begin{theorem}\label{T:mult}
Let $h$ be a polynomial.
\begin{enumerate}[\normalfont (i)]
\item For each superharmonic weight $\omega$ on $\DD$, we have
\[
\|M_h:\cD_\omega\to\cD_\omega\|\le \|T_h:\ell^2\to\ell^2\|.
\]
\item If $\omega_1$ is the harmonic weight on $\DD$ given by $\omega_1(z):=(1-|z|^2)/|1-z|^2$, then
\[
\|M_h:\cD_{\omega_1}\to\cD_{\omega_1}\|= \|T_h:\ell^2\to\ell^2\|.
\]
\end{enumerate}
\end{theorem}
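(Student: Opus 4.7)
My plan is to prove (ii) first, via an isometric identification of $\cD_{\omega_1}$ with a sequence space, and then to bootstrap (i) from it by integrating over the Riesz decomposition of a positive superharmonic weight.

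For part (ii), I would begin from the Richter--Sundberg identity $\cD_{\omega_1}(f) = D_1(f)$, where $D_1$ is the local Dirichlet integral at $1 \in \TT$. Expanding the difference quotient $(f(z) - f(1))/(z-1) = \sum_{n \ge 0} b_n z^n$ with $b_n := \sum_{k > n} a_k$ yields $D_1(f) = \sum_n |b_n|^2$, so that $\Phi \colon f \mapsto (f(0), (b_n)_{n \ge 0})$ is an isometric isomorphism $\cD_{\omega_1} \to \CC \oplus \ell^2$. A short Abel summation, substituting $a_k = b_{k-1} - b_k$ into the tail $\tilde b_n = \sum_{k > n} c_k a_k$ of $h * f$, gives
\[
\tilde b_n = c_{n+1} b_n + \sum_{j > n}(c_{j+1} - c_j) b_j,
\]
which is exactly the action of $T_h$ on the sequence $(b_n)_n$ after the obvious shift of index. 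Consequently $\Phi M_h \Phi^{-1}$ splits as the direct sum of multiplication by $c_0$ on $\CC$ and $T_h$ on $\ell^2$, from which the equality $\|M_h\|_{\cD_{\omega_1}} = \|T_h\|_{\ell^2}$ is read off.

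For part (i), I would invoke the classical Riesz decomposition $\omega = P\mu + G\nu$ of a positive superharmonic function, with $\mu$ and $\nu$ finite positive measures on $\TT$ and $\DD$ respectively. Fubini gives
\[
\cD_\omega(f) = \int_\TT D_\zeta(f)\,d\mu(\zeta) + \int_\DD \cD_{G(\cdot, w)}(f)\,d\nu(w).
\]
Rotational invariance together with (ii) handles the boundary integrand, yielding $D_\zeta(h*f) \le \|T_h\|^2 D_\zeta(f)$ for every $\zeta \in \TT$; the crucial observation is that the Hadamard product commutes with the rotation $z \mapsto \bar\zeta z$ that carries $\zeta$ to $1$. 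For the interior integrand, the conformal substitution $z = \phi_w(u)$ identifies $\cD_{G(\cdot, w)}(f)$ with the Hardy-type quantity $\tfrac{1}{2}(\|f \circ \phi_w\|_{H^2}^2 - |f(w)|^2)$, on which $M_h$ should be controllable through the $H^2$-bound $\|M_h\|_{H^2} = \sup_k |c_k| \le \|T_h\|$. Integrating both pointwise estimates against $\mu$ and $\nu$ then yields $\cD_\omega(h*f) \le \|T_h\|^2 \cD_\omega(f)$, and (i) follows.

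The main obstacle I anticipate is the Green-potential contribution to (i): because $(h*f) \circ \phi_w \ne h * (f \circ \phi_w)$ in general, the conformal-pullback argument does not automatically translate the $H^2$ Hadamard-multiplication bound into a bound for $\cD_{G(\cdot, w)}$. The natural remedies are either to derive a tail-sum representation of $\cD_{G(\cdot, w)}$ mimicking the Richter--Sundberg identity at the boundary (so that the matrix argument of (ii) can be reused), or to estimate $\cD_{G(\cdot, w)}(h*f)$ directly via a quadratic-form inequality that exploits positivity of $T_h^* T_h$ together with a Schur-type bound.
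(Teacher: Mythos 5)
Note first that the paper does not actually prove this theorem: both parts are quoted verbatim from \cite{MR19}. So the relevant question is whether your reconstruction stands on its own.

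Part (ii) essentially does. The identification of $\cD_{\omega_1}$ with $\CC\oplus\ell^2$ via $f\mapsto(f(0),(b_n)_{n\ge 0})$ with $b_n=\sum_{k>n}a_k$ is the Richter--Sundberg formula for the local Dirichlet integral at $1$, and your Abel summation correctly exhibits the induced action of $M_h$ on the tail sequence as the matrix $T_h$ after the index shift. One correction: the block decomposition yields $\|M_h\|=\max(|c_0|,\|T_h\|)$, not $\|T_h\|$; since $c_0$ does not enter $T_h$, the stated equality actually fails for nonzero constant $h$. This imprecision is inherited from the statement itself and is harmless in the applications (only $\|M_{h_n}\|\ge\|T_{h_n}\|$ and a uniform upper bound are ever used), but you should not claim that the equality ``is read off.''

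Part (i) contains a genuine gap, and you have located it yourself: the Green-potential term $\int_\DD \cD_{G(\cdot,w)}(f)\,d\nu(w)$ in the Riesz decomposition. The boundary term is handled correctly (rotation invariance, which commutes with Hadamard multiplication, plus part (ii)), so as written your argument proves (i) only for harmonic weights. Since the potential part is precisely where the substance of \cite[Theorem~1.1]{MR19} lies, the proposal cannot be accepted as a proof of (i). For what it is worth, your first proposed remedy is the right one and does go through: the identity $\int_\TT|f|^2P_w\,dm-|f(w)|^2=\int_\TT|f-f(w)|^2P_w\,dm$ gives
\[
\cD_{G(\cdot,w)}(f)=\frac{1-|w|^2}{2}\sum_{j\ge0}|b_j(w)|^2,
\qquad b_j(w):=\sum_{k>j}a_k w^{k-1-j},
\]
and repeating your Abel summation with $a_k=b_{k-1}(w)-wb_k(w)$ shows that $M_h$ acts on $(b_j(w))_j$ by the matrix $T_h^{(w)}$ obtained from $T_h$ by multiplying its $(j,i)$ entry by $w^{i-j}$. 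The missing lemma is $\|T_h^{(w)}\|\le\|T_h\|$ for $|w|\le 1$, which follows from the averaging formula $T_h^{(w)}=\int_\TT U_\lambda^* T_h U_\lambda\,P_w(\lambda)\,dm(\lambda)$ with $U_\lambda=\operatorname{diag}(1,\lambda,\lambda^2,\dots)$ unitary. Without some such argument, part (i) remains unproved in your write-up.
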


\begin{proof}
Part~(i) is a special case of \cite[Theorem~1.1]{MR19}. Part~(ii) is established in the course of the
proof of the same result, see \cite[p.52]{MR19}.
\end{proof}

To apply Theorem~\ref{T:mult}, 
it is helpful to have at our disposal some explicit estimates for $\|T_c:\ell^2\to\ell^2\|$. 

\begin{theorem}\label{T:Tc}
Let $c:=(c_k)_{k\ge1}$ be a sequence of complex numbers that is eventually zero,
and let $T_c$ be defined by \eqref{E:Tc}.

\begin{enumerate}[{\normalfont(i)}]
\item If $n$ is an integer such that $c_k=0$ for all $k>n$, then
\[
\|T_c:\ell^2\to\ell^2\|^2\le (n+1)\sum_{k=1}^n |c_{k+1}-c_k|^2.
\]
\item For all integers $m,n$ with $1\le m\le n$, we have
\[
\|T_c:\ell^2\to\ell^2\|^2\ge m\sum_{k=m}^n |c_{k+1}-c_k|^2.
\]
\end{enumerate}
\end{theorem}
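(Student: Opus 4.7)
The plan is to exploit the explicit matrix form~\eqref{E:Tc} by first rewriting $(T_cx)_i$ in a way that makes the differences $d_j:=c_{j+1}-c_j$ appear directly.

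For part~(i), since $c_k=0$ for $k>n$ only the first $n+1$ entries of $x$ matter, so I may assume $x\in\mathbb{C}^{n+1}$. A short computation (using $c_{n+1}=0$, so that $c_i=-\sum_{j=i}^n d_j$) gives
\[
(T_cx)_i=\sum_{j=i}^n d_j(x_{j+1}-x_i) \qquad (i=1,\dots,n),
\]
with $(T_cx)_i=0$ for $i>n$. One round of Cauchy--Schwarz, followed by the trivial bound $\sum_{j=i}^n|d_j|^2\le\sum_{j=1}^n|d_j|^2$, yields
\[
\|T_cx\|^2 \;\le\; \Bigl(\sum_{j=1}^n|d_j|^2\Bigr)\sum_{1\le i\le j\le n}|x_{j+1}-x_i|^2.
\]
The remaining factor is handled by the ``variance'' identity: reindexing $k=j+1$, it equals
\[
\sum_{1\le i<k\le n+1}|x_k-x_i|^2 \;=\; (n+1)\|x\|^2-\Bigl|\sum_{k=1}^{n+1}x_k\Bigr|^2 \;\le\; (n+1)\|x\|^2,
\]
completing~(i).

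For part~(ii), the plan is to exhibit a rank-one submatrix that already realises the desired lower bound. Let $B$ be the $m\times(n-m+1)$ block of $T_c$ formed by rows $1,\dots,m$ and columns $m+1,\dots,n+1$. For any such $(i,k)$ one has $i\le m<k$, so the entry is $c_k-c_{k-1}=d_{k-1}$; every row of $B$ is therefore the vector $(d_m,\dots,d_n)$. Hence $B$ is the rank-one outer product of the all-ones vector in $\mathbb{C}^m$ with $(d_m,\dots,d_n)$, and $\|B\|^2=m\sum_{k=m}^n|d_k|^2$. Since a submatrix has operator norm at most that of the enclosing matrix (extend the input by zeros in the omitted coordinates), $\|T_c\|\ge\|B\|$, giving~(ii).

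The one non-routine step is recognising the closed form $(T_cx)_i=\sum_{j\ge i}d_j(x_{j+1}-x_i)$: it turns the problem of estimating a matrix whose entries depend on $c$ into two elementary inequalities (Cauchy--Schwarz and the variance identity) in which $c$ enters only through $\|d\|_2^2$. Part~(ii) is essentially visual once one notices that the upper-right block of $T_c$ collapses to a constant-row matrix.
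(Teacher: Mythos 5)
Your proof is correct. For part~(ii) you take exactly the route of the paper: the same $m\times(n-m+1)$ block of $T_c$ (rows $1,\dots,m$, columns $m+1,\dots,n+1$), every row of which is $(c_{m+1}-c_m,\dots,c_{n+1}-c_n)$; the paper evaluates its norm via $AA^*$, you via the rank-one factorisation, and these are the same computation. For part~(i), by contrast, the paper offers no argument at all, merely citing \cite[Theorem~1.2(ii)]{MR19}, so here your proof is a genuine self-contained alternative. I checked the key identity
\[
(T_cx)_i=\sum_{j=i}^n d_j(x_{j+1}-x_i),
\]
which holds because $c_{n+1}=0$ gives $c_i=-\sum_{j=i}^n d_j$ (and columns beyond $n+1$ of $T_c$ vanish, justifying the reduction to $x\in\CC^{n+1}$); the remaining steps --- Cauchy--Schwarz, the crude bound $\sum_{j=i}^n|d_j|^2\le\sum_{j=1}^n|d_j|^2$, and the variance identity $\sum_{1\le i<k\le n+1}|x_k-x_i|^2=(n+1)\|x\|^2-\bigl|\sum_{k=1}^{n+1}x_k\bigr|^2$ --- are all sound. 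What your version of (i) buys is a proof in place of an external citation, and it makes transparent where the factor $n+1$ comes from, namely the constant in the variance identity.
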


\begin{proof}
Part (i) was already established in \cite[Theorem~1.2(ii)]{MR19}. 
For part~(ii), we remark that the operator norm of $T_c$ is bounded below by 
the norm of any submatrix, in particular that of
the $m\times (n-m+1)$ submatrix
\[
A:=
\begin{pmatrix}
c_{m+1}-c_m &\dots &c_{n+1}-c_n\\
\vdots &\ddots &\vdots\\
c_{m+1}-c_m &\dots &c_{n+1}-c_n
\end{pmatrix}.
\]
Now $AA^*$ is an $m\times m$ matrix, all of whose entries are the same,
namely $\sum_{k=m}^n|c_{k+1}-c_k|^2$. It follows that
\[
\|T_c\|^2\ge \|A\|^2=\|AA^*\|=m\sum_{k=m}^n|c_{k+1}-c_k|^2.\qedhere
\]
\end{proof}


\section{Proofs of Theorems~\ref{T:thm1} and \ref{T:thm2}}\label{S:proofs}

Instead of using the Ces\`aro means $\sigma_n^\alpha$, defined in \eqref{E:cesaro},
we prefer to work with the so-called discrete Riesz means, defined as follows.
Given a formal power series $f(z):=\sum_{k=0}^\infty a_kz^k$ and $\alpha>0$, we let
\[
\rho_n^\alpha[f](z):=\sum_{k=0}^n \Bigl(1-\frac{k}{n+1}\Bigr)^\alpha a_kz^k.
\]
The following result allows to us pass between Ces\`aro means and Riesz means,
at least when $0<\alpha<1$.

\begin{proposition}\label{P:equiv}
Let $\omega$ be a superharmonic weight on $\DD$, let $f\in\cD_\omega$ and let $0<\alpha<1$.
Then $\sigma_n^\alpha[f]\to f$ in $\cD_\omega$ if and only if $\rho_n^\alpha[f]\to f$ in $\cD_\omega$.
\end{proposition}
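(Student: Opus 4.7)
My plan is to express the difference $\sigma_n^\alpha[f]-\rho_n^\alpha[f]$ as a Hadamard multiplication and then control its operator norm on $\cD_\omega$ using the tools developed in Section~\ref{S:hadamard}. Let $\phi_n$ and $\psi_n$ denote the polynomials of degree $n$ whose coefficients are, respectively,
\[
c_k^{(n)} := \binom{n+\alpha}{\alpha}^{-1}\binom{n-k+\alpha}{\alpha} \quad\text{and}\quad d_k^{(n)} := \Bigl(1-\frac{k}{n+1}\Bigr)^\alpha,
\]
so that $\sigma_n^\alpha[f]=\phi_n*f$ and $\rho_n^\alpha[f]=\psi_n*f$. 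Setting $h_n:=\phi_n-\psi_n$, the proposition is equivalent to the unconditional assertion that $\|M_{h_n}f\|_{\cD_\omega}\to 0$ for every $f\in\cD_\omega$.

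By Theorem~\ref{T:mult}(i) followed by Theorem~\ref{T:Tc}(i),
\[
\|M_{h_n}\|_{\cD_\omega\to\cD_\omega}^2\le\|T_{h_n}\|^2\le (n+1)\sum_{k=1}^n|e_{k+1}^{(n)}-e_k^{(n)}|^2,
\]
where $e_k^{(n)}:=c_k^{(n)}-d_k^{(n)}$. The heart of the proof is to show that the right-hand side stays bounded in $n$. Using Stirling's expansion of the Gamma function, one checks that $c_k^{(n)}$ and $d_k^{(n)}$ agree to leading order with $(1-k/n)^\alpha$, and the subleading terms yield
\[
e_k^{(n)}=\frac{\alpha(\alpha-1)}{2}\cdot\frac{k(n-k)^{\alpha-1}}{n^{\alpha+1}}+(\text{lower order})
\]
in the interior. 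A direct differencing and summation then gives $(n+1)\sum_k|e_{k+1}^{(n)}-e_k^{(n)}|^2=O(n^{1-2\alpha})$, which is bounded for $\alpha\ge 1/2$.

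Once uniform boundedness of $\|M_{h_n}\|$ is established, the conclusion follows by a density argument. For any polynomial $p(z)=\sum_{k=0}^d a_k z^k$, the polynomial $M_{h_n}p$ has coefficients $e_k^{(n)}a_k$ for $0\le k\le d$. Since $e_k^{(n)}\to 0$ as $n\to\infty$ for each fixed $k$, we have $M_{h_n}p\to 0$ in $\cD_\omega$. Combined with the uniform bound on $\|M_{h_n}\|$ and the density of polynomials in $\cD_\omega$, this gives $M_{h_n}f\to 0$ for every $f\in\cD_\omega$.

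The main obstacle I foresee is the asymptotic step: one must verify that the leading Gamma-function asymptotics of $c_k^{(n)}$ and $d_k^{(n)}$ cancel after differencing, and must handle separately the boundary range near $k=n$, where the interior expansion of $e_k^{(n)}$ breaks down but contributes only finitely many terms. The estimate $O(n^{1-2\alpha})$ coming from Theorem~\ref{T:Tc}(i) comfortably handles $\alpha\ge 1/2$; covering the full claimed range $0<\alpha<1$ likely requires a sharper matrix-norm bound than Theorem~\ref{T:Tc}(i), one that exploits the additional smoothness of the coefficient sequence $(e_k^{(n)})$.
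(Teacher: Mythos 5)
Your strategy is genuinely different from the paper's: the paper deduces the proposition in one line from Theorem~\ref{T:equiv}, a Banach-space version of M.~Riesz's classical theorem that Ces\`aro and discrete Riesz summability of order $\alpha$ are equivalent for $0<\alpha<1$, applied with $X:=\cD_\omega$ and $x_k:=a_kz^k$. You instead aim to prove the stronger statement that $M_{\phi_n-\psi_n}f\to0$ for \emph{every} $f\in\cD_\omega$, via uniform boundedness of the multiplier norms. Note first that this stronger statement is only sufficient for the proposition, not equivalent to it: the ``if and only if'' holds vacuously for any $f$ for which neither mean converges, without the difference having to tend to zero.

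More seriously, the stronger statement is provably \emph{false} for $0<\alpha<1/2$, so no sharper matrix-norm bound of the kind you hope for at the end can rescue the argument in the full claimed range. Indeed, $e_n^{(n)}=\binom{n+\alpha}{\alpha}^{-1}-(n+1)^{-\alpha}\sim(\Gamma(\alpha+1)-1)\,n^{-\alpha}$, and $\Gamma(\alpha+1)<1$ for $\alpha\in(0,1)$; taking $m=n$ in Theorem~\ref{T:Tc}(ii) and using the equality in Theorem~\ref{T:mult}(ii) gives
$\|M_{h_n}:\cD_{\omega_1}\to\cD_{\omega_1}\|^2\ge n\,|e_n^{(n)}|^2\asymp n^{1-2\alpha}\to\infty$ when $\alpha<1/2$, whence by Banach--Steinhaus there exists $f\in\cD_{\omega_1}$ with $\sigma_n^\alpha[f]-\rho_n^\alpha[f]\not\to0$. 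So your route cannot cover the full range $0<\alpha<1$ asserted by the proposition; you have conflated the proposition with a strictly stronger claim. For $\alpha\ge1/2$ (the only cases the paper actually uses) your outline is plausible and the leading asymptotic you state for $e_k^{(n)}$ is correct, but the key estimate $(n+1)\sum_k|e_{k+1}^{(n)}-e_k^{(n)}|^2=O(n^{1-2\alpha})$ is asserted rather than proved, and making it rigorous requires uniform control of the Stirling error terms in the boundary region $n-k=O(1)$, which is precisely where the dominant contribution arises.
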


To prove this proposition we use a theorem due to M.~Riesz \cite{Ri24}.
Riesz actually proved the result in the scalar case, but
a careful reading of Riesz's proof shows that the theorem easily extends to general Banach spaces.

\begin{theorem}\label{T:equiv}
Let $X$ be a Banach space, 
let $(x_k)_{k\ge0}$ be a sequence in $X$ and let $y$ be an element of $X$.
Then, for each $\alpha\in(0,1)$,
\[
\lim_{n\to\infty}\binom{n+\alpha}{\alpha}^{-1}\sum_{k=0}^n\binom{n-k+\alpha}{\alpha}x_k=y
\iff
\lim_{n\to\infty}\sum_{k=0}^n \Bigl(1-\frac{k}{n+1}\Bigr)^\alpha x_k=y.
\]
\end{theorem}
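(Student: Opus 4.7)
The plan is to reduce the Banach-valued statement to Riesz's scalar theorem \cite{Ri24} by invoking a Banach-valued version of the Silverman--Toeplitz regularity theorem. Both means in the statement are scalar-weighted linear combinations of the $x_k$, so the question is whether two scalar summability methods, known to be equivalent on scalar sequences, remain equivalent on $X$-valued sequences. The standard machinery for such equivalence questions is matrix-theoretic: one looks for a scalar matrix $(t_{n,m})$ satisfying the classical Silverman--Toeplitz regularity conditions
\begin{equation*}
\sup_n\sum_m|t_{n,m}|<\infty,\qquad \lim_{n\to\infty} t_{n,m}=0\ \text{for each }m,\qquad \lim_{n\to\infty}\sum_m t_{n,m}=1,
\end{equation*}
together with an identity expressing one mean as $\sum_m t_{n,m}$ applied to the sequence of other means (and a companion identity in the opposite direction).

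For scalar sequences, the existence and regularity of such $(t_{n,m})$ is precisely the content of Riesz's scalar proof: it is an Abel-summation-type manipulation combined with asymptotic comparison of $(1-k/(n+1))^\alpha$ with $\binom{n-k+\alpha}{\alpha}/\binom{n+\alpha}{\alpha}$. Since the matrix entries and the regularity conditions involve only scalars, the identity and the estimates transfer unchanged to sequences $(x_k)$ taking values in a Banach space $X$.

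The second ingredient is the Banach-valued Silverman--Toeplitz theorem itself: for any $(y_m)\subset X$ with $y_m\to y$ and any scalar matrix $(t_{n,m})$ satisfying the three conditions above, one has $\sum_m t_{n,m}y_m\to y$ in $X$. Its proof is a word-for-word transcription of the scalar argument — split $\sum_m t_{n,m}(y_m - y)$ into a finite initial block (which is small by the column-limit condition) and a tail (in which $\|y_m-y\|$ is uniformly small and one applies the uniform row $\ell^1$-bound) — and uses only the triangle inequality in $X$. Combining this with the scalar construction from the previous paragraph yields both implications of Theorem~\ref{T:equiv}. The main obstacle, if one insists on identifying one, lies not in the Banach-space upgrade but in Riesz's original scalar construction of $(t_{n,m})$ and the verification of its regularity; the upgrade itself is essentially mechanical, which is presumably what the author means by ``a careful reading of Riesz's proof.''
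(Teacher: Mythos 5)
Your proposal is correct and matches the paper's approach: the paper's entire ``proof'' of Theorem~\ref{T:equiv} is the remark that Riesz's scalar argument in \cite{Ri24} extends to Banach spaces upon careful reading, and what you have written is exactly the justification for that remark --- the scalar proof is a regular-matrix (Silverman--Toeplitz) argument whose matrix entries and regularity estimates are purely scalar, and the Toeplitz limit theorem survives verbatim in a Banach space since it uses only the triangle inequality. You correctly locate the real content in Riesz's scalar construction, which is also where the paper leaves it.
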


\begin{proof}[Proof of Proposition~\ref{P:equiv}]
The result follows directly upon applying Theorem~\ref{T:equiv}
with $X:=\cD_\omega$ and $y:=f(z)=\sum_{k\ge0}a_kz^k$ and $x_k:=a_kz^k$.
\end{proof}

\begin{proof}[Proof of Theorem~\ref{T:thm1}]
Let $\omega$ be a superharmonic weight on $\DD$.
To show that Taylor series are $(C,\alpha)$-summable in $\cD_\omega$ for all $\alpha>1/2$,
it suffices to do so for $\alpha\in(\frac{1}{2},1)$. Fix such an $\alpha$.

By Proposition~\ref{P:equiv}, it is enough to show that $\rho_n^\alpha[f]\to f$ in $\cD_\omega$
for all $f\in\cD_\omega$. It is obvious that $\rho_n^\alpha[f]\to f$ if $f$ is a polynomial, and,
as noted in the introduction, polynomials are dense in $\cD_\omega$. Therefore the result will 
follow if we can show that the operator norms of the linear maps
$f\mapsto\rho_n^\alpha[f]:\cD_\omega\to\cD_\omega$
are bounded independently of $n$.

To do this, we identify these maps as a Hadamard multiplication operators.
Indeed, we have $\rho_n^\alpha[f]=M_{h_n}(f)$, where 
\[
h_n(z)=\sum_{k=0}^n \Bigl(1-\frac{k}{n+1}\Bigr)^\alpha z^k.
\]
By Theorem~\ref{T:mult}(i), we have
\[
\|M_{h_n}:\cD_\omega\to\cD_\omega\|\le \|T_{h_n}:\ell^2\to\ell^2\|,
\]
and using Theorem~\ref{T:Tc}(i), we obtain
\begin{align*}
\Bigl\|T_{h_n}:\ell^2\to\ell^2\Bigr\|^2
&\le (n+1)\sum_{k=1}^n\Bigl|\Bigl(1-\frac{k+1}{n+1}\Bigr)^\alpha -\Bigl(1-\frac{k}{n+1}\Bigr)^\alpha \Bigr|^2\\
&=\frac{1}{(n+1)^{2\alpha-1}}\sum_{k=1}^n \Bigl((n+1-k)^\alpha-(n-k)^\alpha\Bigr)^2\\
&=\frac{1}{(n+1)^{2\alpha-1}}\sum_{k=1}^n\Bigl(\int_{n-k}^{n+1-k}\alpha t^{\alpha -1}\,dt\Bigr)^2\\
&\le\frac{1}{(n+1)^{2\alpha-1}}\sum_{k=1}^n\int_{n-k}^{n+1-k}\alpha^2 t^{2\alpha -2}\,dt\\
&=\frac{1}{(n+1)^{2\alpha-1}}\int_{0}^{n}\alpha^2 t^{2\alpha -2}\,dt\\
&\le\frac{\alpha^2}{2\alpha-1}.
\end{align*}

Thus the operator norms of $f\mapsto\rho_n^\alpha[f]:\cD_\omega\to\cD_\omega$ 
are indeed bounded independently of $n$, 
and the proof is complete.
\end{proof}

\begin{proof}[Proof of Theorem~\ref{T:thm2}]
Let $\omega_1(z):=(1-|z|^2)/|1-z|^2$.
By Proposition~\ref{P:equiv}, to show that there exists $f\in\cD_{\omega_1}$ 
whose Taylor series is not $(C,\frac{1}{2})$-summable to $f$,
it is enough to show that there exists $f$ such that $\rho_n^{1/2}[f]\not\to f$ in $\cD_{\omega_1}$.

We   prove that the operator norms of the maps $f\mapsto\rho_n^{1/2}[f]:\cD_{\omega_1}\to\cD_{\omega_1}$ 
tend to infinity as $n\to\infty$. If so, then, by the Banach--Steinhaus theorem, there exists 
$f\in\cD_{\omega_1}$ such that the sequence $\rho_n^{1/2}[f]$ is unbounded in $\cD_{\omega_1}$.
In particular, $\rho_n^{1/2}[f]\not\to f$ in $\cD_{\omega_1}$, as desired.

Once again, to estimate the norm of the map $f\mapsto\rho_n^{1/2}[f]$,
we identify it as a Hadamard multiplication operator, namely
$\rho_n^{1/2}[f]=M_{h_n}(f)$, where 
\[
h_n(z)=\sum_{k=0}^n \Bigl(1-\frac{k}{n+1}\Bigr)^{1/2} z^k.
\]
By Theorem~\ref{T:mult}(ii), we have
\[
\|M_{h_n}:\cD_{\omega_1}\to\cD_{\omega_1}\|= \|T_{h_n}:\ell^2\to\ell^2\|,
\]
and, using Theorem~\ref{T:Tc}(ii), for each $m$ with $1\le m\le n$, we have
\begin{align*}
\Bigl\|T_{h_n}:\ell^2\to\ell^2\Bigr\|^2
&\ge m\sum_{k=m}^n\Bigl|\Bigl(1-\frac{k+1}{n+1}\Bigr)^{1/2} -\Bigl(1-\frac{k}{n+1}\Bigr)^{1/2} \Bigr|^2\\
&=\frac{m}{n+1}\sum_{k=m}^n\Bigl((n+1-k)^{1/2} -(n-k)^{1/2}\Bigr)^2\\
&=\frac{m}{n+1}\sum_{k=m}^n\Bigl(\frac{1}{(n+1-k)^{1/2} +(n-k)^{1/2}}\Bigr)^2\\
&\ge \frac{m}{4(n+1)}\sum_{k=m}^n\frac{1}{n+1-k}\\
&\ge  \frac{m}{4(n+1)}\log(n+2-m).
\end{align*}
In particular, taking $m:=\lceil (n+1)/2\rceil$, we obtain
\[
\Bigl\|T_{h_n}:\ell^2\to\ell^2\Bigr\|^2
\ge \frac{1}{8}\log\Bigl(\frac{n+1}{2}\Bigr),
\]
which tends to infinity with $n$.

Thus the operator norms of $f\mapsto\rho_n^{1/2}[f]:\cD_\omega\to\cD_\omega$ 
tend to infinity with~$n$, as claimed, and the proof is complete.
\end{proof}

	

\bibliographystyle{plain}
\bibliography{biblio}

\end{document}